\newtheorem{theorem}{Theorem}[section]
\newtheorem{Theorem}{Theorem}
\newtheorem{lemma}[theorem]{Lemma}
\newtheorem{Corollary}[Theorem]{Corollary}
\theoremstyle{definition}
\theoremstyle{remark}
\newtheorem{remark}[theorem]{Remark}
\newtheorem*{ack}{Acknowledgements}
\def\Z{\mathbb{Z}}
\def\R{\mathbb{R}}
\def\F{\mathbb{F}}
\def\CFKi{CFK^{\infty}}
\def\varep{\varepsilon}
\def\varep{\varepsilon}
\title{A note on the concordance invariants epsilon and upsilon}
\subjclass[2013]{}
\author[Jennifer Hom]{Jennifer Hom}
\thanks{The author was partially supported by NSF grant DMS-1307879.}
\address {Department of Mathematics, Columbia University, 2990 Broadway \\ New York, NY 10027}
\email{hom@math.columbia.edu}
\numberwithin{equation}{section}
\begin{document}

\begin{abstract}
Ozsv\'ath-Stipsicz-Szab\'o \cite{OSS} recently defined a one-parameter family $\Upsilon_K(t)$ of concordance invariants associated to the knot Floer complex. We compare their invariant to the $\{ -1, 0, 1\}$-valued concordance invariant $\varepsilon(K)$, which is also associated to the knot Floer complex. In particular, we give an example of a knot $K$ with $\Upsilon_K(t) \equiv 0$ but $\varepsilon(K) \neq 0$. 
\end{abstract}

\maketitle

\vspace{-7pt}
\section{Introduction}

Beginning with the $\Z$-valued concordance homomorphism $\tau(K)$ \cite{OS4ball}, the knot Floer homology package \cite{OSknots, R} has yielded an abundance of concordance invariants. One of the benefits of these invariants, as opposed to classical concordance invariants such as signature, is that they can be non-vanishing on topologically slice knots. For example, we have the following theorem.

\begin{Theorem}[{\cite[Theorem 1]{Homsummand}}]
\label{thm:summand}
The subgroup of the smooth concordance group given by topologically slice knots contains a direct summand isomorphic to $\Z^\infty$.
\end{Theorem}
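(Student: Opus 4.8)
The plan is to use the concordance invariant $\varepsilon(K)\in\{-1,0,1\}$, which depends only on the stable equivalence class of $\CFKi(K)$, to refine $\tau$ into a countable family of independent homomorphisms on the topologically slice concordance group $\cC_{\textup{TS}}$. First I would recall the formal algebra of $\varepsilon$: one has $\varepsilon(-K)=-\varepsilon(K)$; $\varepsilon(K_1\#K_2)=\varepsilon(K_1)$ whenever $\varepsilon(K_1)=\varepsilon(K_2)$; and $\varepsilon(K_1\#K_2)=\varepsilon(K_2)$ whenever $\varepsilon(K_1)=0$. From these it follows that $K_1\sim K_2:\Longleftrightarrow\varepsilon(K_1\#-K_2)=0$ is an equivalence relation compatible with connected sum, so that the set $\cCFK$ of stable equivalence classes of knot Floer complexes, modulo $\sim$, becomes an abelian group $\cCFK/{\sim}$; and that declaring $[K_1]>[K_2]$ when $\varepsilon(K_1\#-K_2)=1$ makes $\cCFK/{\sim}$ a \emph{totally ordered} abelian group. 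There is a homomorphism from the smooth concordance group $\cC$ to $\cCFK/{\sim}$, through which $\tau$ factors as an order-preserving homomorphism $\cCFK/{\sim}\to\Z$; so the theorem reduces to producing topologically slice knots that are widely spread out in this total order.

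Concretely, I want an infinite family $\{D_n\}_{n\geq1}$ of topologically slice knots that is \emph{archimedean independent} in $\cCFK/{\sim}$: with $H_0=0$ and $H_n=\langle[D_1],\dots,[D_n]\rangle$, I want each $[D_{n+1}]$ to dominate $H_n$, in the sense that
\[
    \varepsilon\bigl(D_{n+1}\#-(a_1D_1\#\cdots\#a_nD_n)\bigr)=1\qquad\text{for all }a_1,\dots,a_n\in\Z.
\]
The natural source of topologically slice knots with computable knot Floer complexes is twisted positive Whitehead doubles of torus knots (or of connected sums of torus knots): any such double has trivial Alexander polynomial and so is topologically slice by Freedman's theorem, and if the doubling framing $t$ satisfies $t<2\tau(J)$ for the companion $J$ then $\tau(D_+(J,t))=\varepsilon(D_+(J,t))=1$. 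I would take $D_n=D_+(J_n,t_n)$ with companions and framings chosen so that the complexes genuinely grow, compute $\CFKi(D_n)$ from the bordered Heegaard Floer description of satellite knot Floer complexes, and then analyze $\varepsilon$ of the connected sums above --- equivalently of the tensor products $\CFKi(D_{n+1})\otimes\CFKi\bigl(-(a_1D_1\#\cdots\#a_nD_n)\bigr)$ --- to establish domination. This two-stage computation, the satellite complexes and then $\varepsilon$ of their tensor products, is where essentially all of the content lies, and I expect it to be the main obstacle: the companions and framings must be chosen so that the complexes really do spread out in the relevant partial order rather than collapsing into a single $\varepsilon$-equivalence class.

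Granting such a family, it remains to promote the free subgroup $\langle D_1,D_2,\dots\rangle\cong\Z^\infty$ of $\cC_{\textup{TS}}$ to a direct summand. Archimedean independence makes $e_n\mapsto[D_n]$ an injective order-preserving homomorphism $\bigoplus_n\Z\to\cCFK/{\sim}$ (with $\bigoplus_n\Z$ given the lexicographic order by last nonzero coordinate), whose image $H_\infty=\bigcup_n H_n$ is filtered by the convex subgroups $H_n$ with $H_n/H_{n-1}\cong\Z$. Using the total order I would build from this a system of homomorphisms $\phi_n\colon\cC_{\textup{TS}}\to\Z$, factoring through $\cCFK/{\sim}$, with $\phi_n(D_m)=\delta_{nm}$ --- roughly, $\phi_n$ records the $[D_n]$-coordinate of $[K]$ relative to $H_{n-1}$ --- and I would arrange the choice of the $D_n$ so that $D_n$ eventually outgrows, in the order, every knot of a prescribed bounded complexity, which forces $\phi_n(K)=0$ for all large $n$ for each fixed $K$. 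Then $\Phi:=(\phi_n)\colon\cC_{\textup{TS}}\to\bigoplus_n\Z\cong\Z^\infty$ is a well-defined homomorphism restricting to an isomorphism on $\langle D_1,D_2,\dots\rangle$; composing with the inverse of this restriction gives a retraction $\cC_{\textup{TS}}\to\langle D_1,D_2,\dots\rangle$, so $\Z^\infty$ is a direct summand of $\cC_{\textup{TS}}$, the subgroup of the smooth concordance group given by topologically slice knots, as claimed. Making this final step precise takes some care with the order theory, but the real difficulty remains the Floer-theoretic computation of the preceding paragraph.
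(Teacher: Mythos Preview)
This theorem is not proved in the present paper; it is quoted from \cite{Homsummand}, and the paper offers only a two-sentence summary of the strategy: the invariant $\varepsilon$ makes the quotient of the concordance group by $\{K\mid\varepsilon(K)=0\}$ into a totally ordered abelian group, and the order structure is then used to construct independent homomorphisms. Your proposal is a faithful and substantially more detailed expansion of exactly this sketch---the total order via $\varepsilon$, the use of Whitehead doubles as topologically slice knots with controllable $\CFKi$, archimedean independence in the ordered group, and the passage from a free subgroup to a summand via a retraction built from order-theoretic homomorphisms. This is indeed the architecture of the proof in \cite{Homsummand}, so there is nothing in the present paper to compare against beyond confirming that your outline matches its brief description.

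One small caution on the details: in the actual argument of \cite{Homsummand} the relevant knots are not chosen to be archimedean-dominant over one another in the way you phrase it (each $D_{n+1}$ dominating all combinations of earlier $D_i$), but rather the numerical invariants $a_i(K)$ extracted from $\CFKi$ are used to build the homomorphisms directly, and the knots are chosen so that these invariants separate them. Your order-theoretic framing is morally equivalent and would work, but the bookkeeping in the original is organized somewhat differently; if you pursue the computation you may find it cleaner to follow the $a_i$ route rather than verifying domination over arbitrary linear combinations.
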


\noindent The proof of the above theorem relies on the $\{-1, 0, 1\}$-valued concordance invariant $\varep(K)$ associated to the knot Floer complex \cite[Definition 3.1]{Homsmooth}. The quotient of the concordance group by the subgroup $\{ K \mid \varep(K)=0 \}$ is totally ordered, and properties of the order structure can be used to construct linearly independent concordance homomorphisms.

Ozs\'vath-Stipsicz-Szab\'o \cite[Theorem 1.20]{OSS} recently gave a new proof of Theorem \ref{thm:summand}, using a one-parameter family $\Upsilon_K(t)$ of $\R$-valued concordance homomorphisms also associated to the knot Floer complex. Both $\varep$ and $\Upsilon$ are strictly stronger than $\tau$ in that
\[ \varep(K) = 0 \textup{ implies } \tau(K) =0 \qquad \textup{ and } \qquad \Upsilon_K(t) \equiv 0 \textup{ implies } \tau(K) =0, \]
but there exist knots $K$ with $\tau(K) = 0$ while $\varep(K) \neq 0$ and $\Upsilon_K(t) \not\equiv 0$. One such example is the knot $T_{3,4} \# -T_{2, 7}$, where $T_{p, q}$ denotes the $(p, q)$-torus knot and $-K$ denotes the reverse of the mirror image of $K$.

The knot Floer complex $\CFKi(K)$ is a bifiltered chain complex associated to the knot $K$. We call the two filtrations the vertical and horizontal filtrations. The invariants $\varep$ and $\Upsilon$ are both defined using the bifiltration, while the definition of $\tau$ uses only one of the two filtrations. Roughly, $\varep(K)$ is a measure of how the vertical filtration interacts with the horizontal filtration: the so-called vertical homology has rank one, and $\varep$ measures whether this homology class is a boundary, cycle, or neither in the horizontal homology. On the other hand, the idea behind $\Upsilon_K(t)$ is to apply a linear transformation to the bifiltration on the knot Floer complex and then look at the grading of a certain distinguished generator in the homology of the resulting complex.

More generally, both $\varepsilon$ and $\Upsilon$ are invariants of not just knots, but  of (suitable) bifiltered chain complexes.  In \cite[Proposition 9.4]{OSS}, Ozsv\'ath-Stipsicz-Szab\'o give an example of a complex $C$ with $\varepsilon(C) = 0$ but $\Upsilon_C(t) \not\equiv 0$, although it is currently unknown if the complex $C$ is realized as $\CFKi$ of a knot. Conversely, we prove the following.

\begin{Theorem}
\label{thm:main}
There exist knots $K$ with $\Upsilon_K(t) \equiv 0$ but $\varepsilon(K) \neq 0$.
\end{Theorem}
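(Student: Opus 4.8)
The plan is to produce $K$ as a connected sum of torus knots (and, if needed, a cable), exploiting that $\Upsilon$ is a concordance \emph{homomorphism} while $\varepsilon$ is not, so that $\Upsilon$-contributions can be arranged to cancel while the local equivalence class of $\CFKi(K)$ stays nontrivial. The two tools I would use are: (1) the explicit piecewise-linear formula for $\Upsilon$ of an L-space knot in terms of its staircase \cite{OSS}, together with the linear relations among such functions that it produces — e.g.\ $\Upsilon_{T_{2,2n+1}}(t) = n\,\Upsilon_{T_{2,3}}(t) = -n\min(t,2-t)$, and less obvious coincidences such as $\Upsilon_{T_{3,7}} = 2\,\Upsilon_{T_{3,4}}$, which one checks by reading off the staircase representatives of the top homology class; and (2) the computability of $\varepsilon$ from $\CFKi(K)$ — via the behaviour of the generator of vertical homology in the horizontal complex — together with $\CFKi(K_1\#K_2)\simeq\CFKi(K_1)\otimes_{\F[U,U^{-1}]}\CFKi(K_2)$.

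Given such a relation $\Upsilon_{T_a}=n\,\Upsilon_{T_b}$ (forcing $\tau(T_a)=n\tau(T_b)$), the candidate is $K:=T_a\#(-T_b)^{\#n}$, for which $\Upsilon_K=\Upsilon_{T_a}-n\,\Upsilon_{T_b}\equiv0$ by additivity. In practice I would scan the small torus knots and their connected sums for such coincidences; if torus knots alone do not suffice, I would bring in a cable such as $C_{2,1}(T_{2,3})$, whose $\Upsilon$-function has breakpoints (at $1/2,1,3/2$) realized by no torus knot, to gain the freedom needed to solve $\sum_i\epsilon_i\Upsilon_{K_i}\equiv0$. The bookkeeping here must match every slope and every breakpoint, not merely $\tau$ and the genus; this is the first place care is needed.

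The main obstacle is then to certify $\varepsilon(K)\ne0$ — in the displayed case, that $T_a$ and $(T_b)^{\#n}$ are not $\varepsilon$-equivalent. Since $\CFKi(K)$ is the $\F[U,U^{-1}]$-tensor product of the explicit staircase complex of $T_a$ with $n$ mirror staircases $\CFKi(-T_b)$, I would reduce this (large) tensor product by a filtered change of basis, cancelling acyclic summands, to obtain a small model, locate the distinguished generator of vertical homology there, and check directly that it fails to be a cycle in the horizontal complex, forcing $\varepsilon(K)=\pm1$. This amounts to exhibiting the local equivalence class of $\CFKi(K)$ as nonzero even though $\Upsilon_K\equiv0$ — the mirror of the phenomenon \cite[Proposition 9.4]{OSS} records for an abstract complex — and one must simultaneously ensure that the pieces whose $\Upsilon$'s cancel are not themselves locally equivalent (otherwise $\varepsilon$ would also vanish). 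Reducing the tensor product by hand and reading off $\varepsilon$ from it is the technical heart of the argument.
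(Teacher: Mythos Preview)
Your high-level strategy is exactly the one the paper uses: exploit additivity of $\Upsilon$ to find a connected sum of L-space knots (torus knots and cables) with $\Upsilon\equiv 0$, then certify $\varepsilon\neq 0$ from the knot Floer complex. What you have written, however, is a plan rather than a proof: you do not pin down a specific example and verify both conditions.

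The paper's concrete choice is $K=T_{2,5}\#-T_{4,5}\#T_{2,3;2,5}$, and its execution is cleaner than the brute-force tensor computation you propose, via two shortcuts you do not mention. First, instead of matching breakpoints of several $\Upsilon$-functions directly, the paper shows (using the staircase calculus of \cite{HancockHomNewman}) that $\CFKi(T_{4,5}\#-T_{2,3;2,5})$ contains a staircase summand of type $[2]$ carrying the total homology; this single summand determines $\Upsilon$, and its $\Upsilon$ visibly coincides with $\Upsilon_{T_{2,5}}$. Second, rather than simplifying the full tensor product to read off $\varepsilon$, the paper invokes the secondary invariant $a_1$ from \cite{Homsmooth}: the summand above gives $a_1(T_{4,5}\#-T_{2,3;2,5})=2$, while $a_1(T_{2,5})=1$, and the comparison lemma ``$a_1(J)>a_1(K)\Rightarrow\varepsilon(K\#-J)=1$'' finishes the argument with no further computation. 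Your proposed route---reduce $\CFKi(K)$ by filtered basis changes and inspect the vertical generator---would also work, but is substantially more laborious.

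On your suggested example: the relation $\Upsilon_{T_{3,7}}=2\,\Upsilon_{T_{3,4}}$ is correct, so $K'=T_{3,7}\#(-T_{3,4})^{\#2}$ does have $\Upsilon_{K'}\equiv 0$. But note that $a_1(T_{3,7})=a_1(T_{3,4})=1$, so the $a_1$-comparison lemma does not apply here and you would genuinely have to carry out the tensor-product simplification (or use higher $a_i$'s) to decide $\varepsilon(K')$. The paper's example is engineered precisely so that $a_1$ already separates the two sides.
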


\noindent The knots used in the above theorem are connected sums of certain (iterated) torus knots.

An interesting question to consider is what obstructions to sliceness can be extracted from $\CFKi(K)$ when $\Upsilon_K(t) \equiv 0$ and $\varep(K) = 0$.

Recall that the \emph{concordance genus} of $K$, $g_c(K)$, is the minimal Seifert genus of any knot $K'$ which is concordant to $K$. The function $\Upsilon_K(t)$ is a piecewise-linear function of $t$ whose slope has finitely many discontinuities \cite[Proposition 1.4]{OSS}. Let $s$ denote the maximum of the finitely many slopes appearing in the graph of $\Upsilon_K(t)$. Ozsv\'ath-Stipsicz-Szab\'o \cite[Theorem 1.13]{OSS} prove that
\[ s \leq g_c(K). \]
There is also a concordance genus bound $\gamma(K)$, defined using $\varepsilon$ \cite{Homconcordancegenus}. %The same knots used in Theorem \ref{thm:main} can be used to prove:

\begin{Corollary}
\label{cor:concordancegenus}
There exist knots $K$ for which the concordance genus bound given by $\Upsilon_K(t)$ is zero, but $\gamma(K) \neq 0$.
\end{Corollary}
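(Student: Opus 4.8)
The plan is to obtain this as an immediate consequence of Theorem \ref{thm:main}. Let $K$ be one of the knots produced there, so that $\Upsilon_K(t) \equiv 0$ on $[0,2]$ while $\varepsilon(K) \neq 0$.

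First I would unwind what the $\Upsilon$-bound says for such a $K$. Since $\Upsilon_K(t)$ is the constant function $0$, its graph is a single horizontal segment, so every slope occurring in it is $0$; in particular the maximal slope $s$ of \cite[Theorem 1.13]{OSS} equals $0$. Hence the inequality $s \leq g_c(K)$ degenerates to the trivial statement $0 \leq g_c(K)$, i.e.\ the concordance genus bound extracted from $\Upsilon_K(t)$ is zero.

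Next I would invoke the relevant property of the $\varepsilon$-concordance genus bound $\gamma$ from \cite{Homconcordancegenus}: namely that $\gamma(K) \geq 1$ whenever $\varepsilon(K) \neq 0$ (a knot with $\varepsilon(K) \neq 0$ is not $\varepsilon$-equivalent to the unknot, and $\gamma$ detects this). Applying this to our $K$, for which $\varepsilon(K) \neq 0$ by Theorem \ref{thm:main}, gives $\gamma(K) \neq 0$. Combining this with the previous paragraph proves the corollary.

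The substance of the corollary is therefore entirely contained in Theorem \ref{thm:main}, and I do not expect any genuine obstacle here beyond the bookkeeping above; the only point to be careful about is to cite the precise formulation of the $\gamma$-bound, so that the comparison is with $\gamma$ itself rather than the weaker bound $g_c(K) \geq 1$ coming merely from non-sliceness.
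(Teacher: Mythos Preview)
Your proposal is correct and follows essentially the same argument as the paper: both deduce the corollary directly from Theorem~\ref{thm:main} together with the fact (from \cite{Homconcordancegenus}) that $\gamma(K)=0$ if and only if $\varepsilon(K)=0$. The only difference is cosmetic---you spell out why $\Upsilon_K\equiv 0$ forces the maximal slope $s$ to be zero, whereas the paper leaves this implicit.
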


\begin{ack}
I would like to thank Peter Ozsv\'ath for useful correspondence, and Tye Lidman for helpful comments on an earlier draft. 
\end{ack}

\section{The example}

We will let $T_{p, q; s, t}$ denote the $(s, t)$-cable of $T_{p,q}$, where $s$ denotes the longitudinal winding. We assume the reader is familiar with the knot Floer complex; see, for example, \cite[Section 2]{Homsmooth} and \cite[Section 2]{OSS}.

\begin{lemma}
\label{lem:summandK}
Let $K = T_{4,5} \# -T_{2,3; 2,5}$. Then $\CFKi(K)$ contains a direct summand generated over $\F[U, U^{-1}]$ by $x, y,$ and $z$ with
\begin{align*}
	M(x) & = 0  &A(x) & = 2 \\
	M(y) & = -3  &A(y) & = 0 \\
	M(z) & = -4  &A(z) & = -2
\end{align*}
and differential
\begin{align*}
	\partial x = 0 \qquad \qquad \partial y = U^2 x + z \qquad \qquad \partial z =0.
\end{align*}
Here, $M$ and $A$ denote the Maslov grading and Alexander filtration, respectively.
\end{lemma}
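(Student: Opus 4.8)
The plan is to compute $\CFKi(K)$ directly and then split off the asserted summand by a filtered change of basis. First, both $T_{4,5}$ and $T_{2,3;2,5}$ are L-space knots: torus knots are, and the $(2,5)$-cable of $T_{2,3}$ is an L-space knot since $5 \ge 2\big(2g(T_{2,3})-1\big)=2$, by Hedden's cabling results. Hence $\CFKi(T_{4,5})$ and $\CFKi(T_{2,3;2,5})$ are staircase complexes, which I would pin down from their Alexander polynomials. A direct computation gives $\Delta_{T_{4,5}}(t)=t^{6}-t^{5}+t^{2}-1+t^{-2}-t^{-5}+t^{-6}$, so $\CFKi(T_{4,5})$ is the staircase with successive step lengths $(1,3,2,2,3,1)$ (seven generators), while the cabling formula $\Delta_{T_{2,3;2,5}}(t)=\Delta_{T_{2,3}}(t^{2})\,\Delta_{T_{2,5}}(t)=t^{4}-t^{3}+1-t^{-3}+t^{-4}$ shows $\CFKi(T_{2,3;2,5})$ is the staircase with step lengths $(1,3,3,1)$ (five generators); in both cases the Maslov gradings are forced once one normalizes $M=0$ at the top generator.

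Since $\CFKi(-K')=\CFKi(K')^{*}$ and $\CFKi$ is multiplicative under connected sum, $\CFKi(K)$ is filtered chain homotopy equivalent to $\CFKi(T_{4,5})\otimes_{\F[U,U^{-1}]}\CFKi(T_{2,3;2,5})^{*}$. I would write this tensor product out explicitly — a complex on $7\cdot 5$ generators over $\F[U,U^{-1}]$ with the Leibniz differential — keeping track of both filtrations and the Maslov grading. Its homology is $\F[U,U^{-1}]$, so all but one ``strand'' is acyclic, and I would perform a filtered change of basis cancelling as much of the acyclic part as possible. The content of the lemma is that what survives is a three-generator summand with the stated bigradings and differential $\partial y=U^{2}x+z$, the remaining direct summand being acyclic over $\F[U,U^{-1}]$.

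The main obstacle is precisely this change of basis: one has to exhibit the combinations of tensor-product generators playing the roles of $x$, $y$, $z$, and then verify that the obvious map $\langle x,y,z\rangle\oplus Q\to\CFKi(K)$ is a filtered chain isomorphism for the appropriate complement $Q$ — i.e.\ that $\langle x,y,z\rangle$ is a genuine \emph{direct} summand, not merely a subcomplex or quotient. Several things keep the bookkeeping under control: the generator of $H_{*}(\CFKi(K))\cong\F[U,U^{-1}]$ together with the distinguished cycle one filtration level below it single out $x$ and $z$, after which $y$ and the differential are essentially forced; the symmetry $\CFKi(K)\simeq\CFKi(K)^{*}$ up to an overall grading shift roughly halves the work; and one can organize the computation by tensoring the seven-step staircase of $T_{4,5}$ against the generators of $\CFKi(T_{2,3;2,5})^{*}$ one at a time, so that most of the $35$ generators fall into visibly acyclic configurations that cancel immediately. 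Once the three generators are identified, the grading claims $M(x)=0,\ M(y)=-3,\ M(z)=-4$ and $A(x)=2,\ A(y)=0,\ A(z)=-2$ follow by reading off the bigradings on the two staircases.
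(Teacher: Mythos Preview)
Your approach is correct and will yield the result, but it takes a different route from the paper's argument. You propose to compute the $35$-generator tensor product $\CFKi(T_{4,5})\otimes\CFKi(T_{2,3;2,5})^{*}$ directly and then perform filtered cancellations until the three-generator summand emerges. The paper instead argues indirectly: denoting the candidate summand by $C$ (the staircase $[2]$ in the Hancock--Hom--Newman notation), it invokes a lemma of Hancock--Hom--Newman on tensoring staircases to see that $\CFKi(T_{2,3;2,5})\otimes C$ has the form $[1,3,2]$, which coincides with $\CFKi(T_{4,5})$; this equality then forces $C$ to split off from $\CFKi(T_{4,5})\otimes\CFKi(T_{2,3;2,5})^{*}$. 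The paper's trick replaces your $35$-generator computation and explicit change of basis with a much smaller verification (indeed, an appeal to an existing lemma), at the cost of invoking outside machinery. Your direct method is more elementary and self-contained but more laborious, and the step you flag as the main obstacle --- actually exhibiting the filtered splitting on $35$ generators --- is exactly what the paper's indirect argument circumvents.
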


\begin{proof}
The knot $T_{2,3; 2,5}$ is an L-space knot \cite[Theorem 1.10]{HeddencablingII}; see also \cite{HomLspace}. The Alexander polynomial of $T_{2,3; 2,5}$ is
\begin{align*}
	 \Delta_{T_{2,3; 2,5}}(t) &= \Delta_{T_{2, 3}}(t^2) \cdot \Delta_{T_{2,5}} \\
	 &= t^4 - t^3 + 1 -t^{-3} + t^{-4}.
\end{align*}
Then by \cite{OSlens} (as restated in \cite[Theorem 2.10]{OSS}), the complex $\CFKi(T_{2,3; 2,5})$ is generated over $\F[U, U^{-1}]$ by $a, b, c, d,$ and $e$ with
\begin{align*}
	M(a) & = 0  &A(a) & = 4 \\
	M(b) & = -1  &A(b) & = 3 \\
	M(c) & = -2  &A(c) & = 0 \\
	M(b) & = -7  &A(b) & = -3 \\
	M(c) & = -8  &A(c) & = -4
\end{align*}
and differential
\begin{align*}
	\partial a = \partial c = \partial e =0 \qquad \qquad \partial b = U a + c \qquad \qquad \partial d = U^3 c + e.
\end{align*}
In the language of  \cite[Section 2.4]{HancockHomNewman}, we have that $\CFKi(T_{2,3; 2,5})$ can be denoted $[1, 3]$, and the summand $C$ specified in the statement of Lemma \ref{lem:summandK} can be denoted $[2]$. This notation refers to the lengths of the horizontal and vertical arrows in a graphical depiction of $\CFKi$, beginning from the generator of vertical homology and continuing to the point of symmetry. See Figures \ref{subfig:T_2325}  and \ref{subfig:K}. It then follows from \cite[Lemma 3.1]{HancockHomNewman} that we have that $\CFKi(T_{2,3; 2,5}) \otimes C$ is of the form $[1, 3, 2]$.

The Alexander polynomial of $T_{4, 5}$ is
\[ \Delta_{T_{4,5}}(t) = t^6 - t^5 + t^2 - 1 + t^{-2} -t^{-5} + t^6.\]
Since $T_{4,5}$ admits a lens space surgery, it an L-space knot. Thus, we may apply \cite[Theorem 2.10]{OSS} to obtain a description of $\CFKi(T_{4,5})$, and we see that, in the notation of \cite[Section 2.4]{HancockHomNewman}, this complex is of the form $[1, 3, 2]$. See Figure \ref{subfig:T_45}.

\begin{figure}[htb!]
\vspace{5pt}
\labellist
\small \hair 2pt
\endlabellist
\centering
\subfigure[]{
\begin{tikzpicture}[scale=0.7]
	\begin{scope}[thin, gray]
		\draw [<->] (-1, 0) -- (5, 0);
		\draw [<->] (0, -1) -- (0, 7);
	\end{scope}
	\draw[step=1, black!30!white, very thin] (-0.9, -0.9) grid (4.9, 6.9);
	\filldraw (0, 4) circle (2pt) node[] (a){};
	\filldraw (1, 4) circle (2pt) node[] (b){};
	\filldraw (1, 1) circle (2pt) node[] (c){};
	\filldraw (4, 1) circle (2pt) node[] (d){};
	\filldraw (4, 0) circle (2pt) node[] (e){};
	\draw [very thick, <-] (a) -- (b);
	\draw [very thick, <-] (c) -- (b);
	\draw [very thick, <-] (c) -- (d);
	\draw [very thick, <-] (e) -- (d);
\end{tikzpicture}
\label{subfig:T_2325}
}
\hspace{25pt}
\subfigure[]{
\begin{tikzpicture}[scale=0.7]
	\begin{scope}[thin, gray]
		\draw [<->] (-1, 0) -- (3, 0);
		\draw [<->] (0, -1) -- (0, 7);
	\end{scope}
	\draw[step=1, black!30!white, very thin] (-0.9, -0.9) grid (2.9, 6.9);
	\filldraw (0, 2) circle (2pt) node[] (a){};
	\filldraw (2, 2) circle (2pt) node[] (b){};
	\filldraw (2, 0) circle (2pt) node[] (c){};
	\draw [very thick, <-] (a) -- (b);
	\draw [very thick, <-] (c) -- (b);
\end{tikzpicture}
\label{subfig:K}
}
\hspace{25pt}
\subfigure[]{
\begin{tikzpicture}[scale=0.7]
	\begin{scope}[thin, gray]
		\draw [<->] (-1, 0) -- (7, 0);
		\draw [<->] (0, -1) -- (0, 7);
	\end{scope}
	\draw[step=1, black!30!white, very thin] (-0.9, -0.9) grid (6.9, 6.9);
	\filldraw (0, 6) circle (2pt) node[] (a){};
	\filldraw (1, 6) circle (2pt) node[] (b){};
	\filldraw (1, 3) circle (2pt) node[] (c){};
	\filldraw (3, 3) circle (2pt) node[] (d){};
	\filldraw (3, 1) circle (2pt) node[] (e){};
	\filldraw (6, 1) circle (2pt) node[] (f){};
	\filldraw (6, 0) circle (2pt) node[] (g){};
	\draw [very thick, <-] (a) -- (b);
	\draw [very thick, <-] (c) -- (b);
	\draw [very thick, <-] (c) -- (d);
	\draw [very thick, <-] (e) -- (d);
	\draw [very thick, <-] (e) -- (f);
	\draw [very thick, <-] (g) -- (f);
\end{tikzpicture}
\label{subfig:T_45}
}
\caption{Left, $\CFKi(T_{2,3; 2,5})$. Center, the relevant summand of $\CFKi(T_{4,5} \#-T_{2,3; 2,5})$ from the statement of Lemma \ref{lem:summandK}. Right, $\CFKi(T_{4,5})$. More precisely, $\CFKi$ is generated over $\F[U, U^{-1}]$ by the generators depicted.}
\label{fig:CFK}
\end{figure}
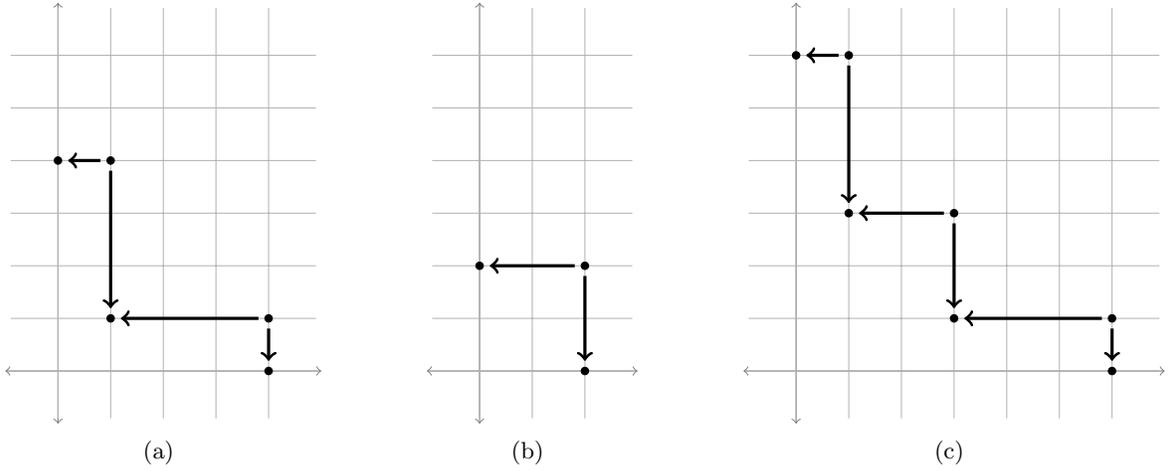

It follows from \cite[Section 2.4]{HancockHomNewman} that since $\CFKi(T_{2,3; 2,5}) \otimes C$ has the same form as $\CFKi(T_{4,5})$, the complex $C$ is a direct summand of $\CFKi(T_{4,5}) \otimes \CFKi(T_{2,3; 2,5})^*$, or, equivalently, $\CFKi(T_{4,5} \#-T_{2,3; 2,5})$.
\end{proof}

\begin{lemma}
\label{lem:UpsilonK}
Let $K = T_{4,5} \# -T_{2,3; 2,5}$. Then 
\[ \Upsilon_K(t) = \left\{
	\begin{array}{ll}
		-2t & \textup{if } 0 \leq t \leq 1\\
		2t-4 & \textup{if } 1 < t \leq 2.
	\end{array} \right. \]
\end{lemma}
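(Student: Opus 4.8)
The strategy is to reduce the computation to the direct summand $C$ identified in Lemma~\ref{lem:summandK}, compute $\Upsilon_C(t)$ by hand, and then argue that the rest of $\CFKi(K)$ contributes nothing. The first step is to recall that $\Upsilon_K(t)$ is a concordance invariant of $\CFKi(K)$ that depends only on the local (``stable'') equivalence class of the complex; equivalently, acyclic summands and summands with no $U$-nontorsion homology do not affect $\Upsilon$. Since $\CFKi(T_{2,3;2,5})\otimes C\simeq\CFKi(T_{4,5})$ as bifiltered complexes (up to the graphical normal form of~\cite[Section 2.4]{HancockHomNewman}), we have $\CFKi(K)\simeq C\oplus(\textup{acyclic})$ after tensoring with $\CFKi(T_{2,3;2,5})^*$ and cancelling, so $\Upsilon_K(t)=\Upsilon_C(t)$. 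I would state this cleanly as a reduction, citing the relevant invariance property of $\Upsilon$ from~\cite{OSS}.

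The core computation is then $\Upsilon_C(t)$ for the three-generator complex $C$ with $\partial y=U^2x+z$, $M(x)=0, A(x)=2$, $M(z)=-4,A(z)=-2$, and $M(y)=-3,A(y)=0$. Following the definition in~\cite{OSS}, one introduces the $t$-modified grading $\gr_t$ on $\CFKi$, roughly $\gr_t=M-t\cdot(\textup{Alexander filtration level})$ suitably normalized, takes the associated filtered complex $t\mathcal{CFK}$, and sets $\Upsilon_K(t)$ to be $-2$ times the minimal $\gr_t$-grading of a cycle generating the ($U$-nontorsion) homology. For $C$, the homology over $\F[U,U^{-1}]$ is generated by the cycle $x$ (equivalently $z$, since $x$ and $z$ are homologous up to a power of $U$: $U^2x=z+\partial y$, so $[z]=[U^2x]$ in homology). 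I would compute the $\gr_t$-value of the best representative of this homology class: for $0\le t\le1$ the optimal representative is (a $U$-translate of) $x$, giving $\Upsilon_C(t)=-2t$; for $1<t\le2$ the optimal representative becomes $z$, giving $\Upsilon_C(t)=2t-4$. The crossover at $t=1$ is exactly where the two $t$-modified gradings of $x$ and $z$ coincide. This is a small, explicit calculation that can be done directly from the definition, or by invoking~\cite[Theorem 2.10 / the $\Upsilon$ computation for lens-space-type complexes]{OSS} applied to the ``staircase'' fragment.

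The main obstacle, such as it is, will be bookkeeping the normalization conventions: the precise definition of $\gr_t$, the factor of $-2$, the choice of basepoint/shift so that $\Upsilon_K(0)=0$, and the fact that $A$ as given in the lemma is the Alexander \emph{filtration} level of the listed generators rather than a symmetrized grading. I would be careful to match~\cite{OSS}'s conventions exactly, and to verify the answer passes the built-in sanity checks: $\Upsilon_K(0)=0$, the symmetry $\Upsilon_K(t)=\Upsilon_K(2-t)$ on $[0,2]$ (visible here since $-2t$ and $2t-4$ are exchanged under $t\mapsto 2-t$), and the initial slope $-\Upsilon_K'(0)=-2=-2\tau(K)$ with $\tau(K)=2$ (consistent with $\tau(T_{4,5})=6$, $\tau(T_{2,3;2,5})=4$, hence $\tau(K)=6-4=2$). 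Once the conventions are pinned down, the two linear pieces and the single breakpoint follow immediately from the structure of $C$.
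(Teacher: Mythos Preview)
Your approach is essentially the same as the paper's: reduce to the summand $C$ from Lemma~\ref{lem:summandK} (since it carries all the $U$-nontorsion homology, the complement is acyclic and does not affect $\Upsilon$), and then compute $\Upsilon_C(t)$ from its staircase structure. The paper simply cites the calculation in \cite[Proof of Theorem~6.2]{OSS} rather than doing the $\gr_t$ bookkeeping by hand, but your direct computation and your alternative citation amount to the same thing; one small slip in your sanity check is the normalization $\Upsilon_K'(0)=-\tau(K)$, which here gives slope $-2$ since $\tau(K)=2$, not $-2\tau(K)$.
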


\begin{proof}
The summand of $\CFKi(K)$ described in Lemma \ref{lem:summandK} generates the homology of the total complex $\CFKi(K)$. In particular, this summand determines $\Upsilon_K(t)$. Although this summand is not itself $\CFKi$ of an L-space knot \cite[Corollary 9]{HeddenWatson}, the calculation in \cite[Proof of Theorem 6.2]{OSS} still applies, yielding the desired result.
\end{proof}

\begin{lemma}
\label{lem:UpsilonT25}
For the $(2,5)$-torus knot, we have 
\[ \Upsilon_{T_{2,5}}(t) = \left\{
	\begin{array}{ll}
		-2t & \textup{if } 0 \leq t \leq 1\\
		2t-4 & \textup{if } 1 < t \leq 2.
	\end{array} \right. \]
\end{lemma}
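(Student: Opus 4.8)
The plan is to compute $\Upsilon_{T_{2,5}}(t)$ directly from $\CFKi(T_{2,5})$ using the formula for L-space knots. Since $T_{2,5}$ is an L-space knot with Alexander polynomial $\Delta_{T_{2,5}}(t) = t^2 - t + 1 - t^{-1} + t^{-2}$, the complex $\CFKi(T_{2,5})$ is, in the notation of \cite[Section 2.4]{HancockHomNewman}, the staircase complex $[1,1]$: it is generated over $\F[U, U^{-1}]$ by five generators $p_0, p_1, p_2, p_3, p_4$ with Alexander filtration levels $2, 1, 0, -1, -2$ and Maslov gradings $0, -1, -2, -3, -4$, with $\partial p_1 = U p_0 + p_2$ and $\partial p_3 = U p_2 + p_4$. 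First I would record this staircase description explicitly (or simply cite \cite[Theorem 2.10]{OSS} applied to $\Delta_{T_{2,5}}$).

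Next I would invoke the closed-form computation of $\Upsilon$ for staircase complexes carried out in \cite[Proof of Theorem 6.2]{OSS}: for the staircase $[1,1]$ the argument there identifies the distinguished generator realizing the minimal $t$-modified grading in $t\mathcal{F}$-homology as a cycle built from the staircase, and evaluating its grading gives a piecewise-linear function that is $-2t$ on $[0,1]$ and then changes slope at $t=1$ to become $2t - 4$ on $[1,2]$, by symmetry $\Upsilon_K(t) = \Upsilon_K(2-t)$ on $[0,2]$ for a genus-$2$ knot. Concretely, on $[0,1]$ the minimizer is supported at the outermost generator $p_0$ (at filtration $2$, Maslov $0$), contributing grading $-tA(p_0) + M(p_0) = -2t$; the corner of the staircase at $A = 1$ governs the slope change at $t = 1$. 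This is exactly the same computation already used to prove Lemma \ref{lem:UpsilonK}, since the summand in Lemma \ref{lem:summandK} is the staircase $[2]$, which for the purposes of the $\Upsilon$ computation behaves like a rescaled $[1,1]$ staircase of genus $2$.

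The main obstacle, such as it is, is bookkeeping rather than conceptual: one must be careful that the $\Upsilon$ computation in \cite{OSS} is stated for L-space knots in a normalization matching the grading conventions used here, and verify that the slope and breakpoint data ($\text{slope } \pm 2$, breakpoint at $t=1$, genus $2$) come out as claimed. An alternative, fully self-contained route is to use that $\Upsilon$ is a concordance homomorphism together with the known value $\tau(T_{2,5}) = 2$ and the fact that for L-space knots $\Upsilon_K(t)$ near $t = 0$ has slope $-\tau(K)$; combined with the symmetry $\Upsilon_K(t) = \Upsilon_K(2g(K) - t)$ this pins down the function once one knows it has a single breakpoint, which follows from the length-$2$ staircase shape. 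Either way the computation is short, and the lemma is really recording a known special case for later comparison with Lemma \ref{lem:UpsilonK}.
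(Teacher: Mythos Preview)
Your computation is correct, but the paper's own proof is a single line: it just invokes \cite[Theorem~1.15]{OSS}, which gives a closed formula for $\Upsilon$ of alternating (or more generally thin) knots, namely $\Upsilon_K(t) = -\tau(K)\,(1-|1-t|)$; plugging in $\tau(T_{2,5})=2$ immediately yields the stated piecewise-linear function. Your approach instead reconstructs the staircase $\CFKi(T_{2,5})$ from the Alexander polynomial via \cite[Theorem~2.10]{OSS} and then reruns the L-space computation from \cite[Proof of Theorem~6.2]{OSS}. This is perfectly valid and has the virtue of being parallel to the argument you already need for Lemma~\ref{lem:UpsilonK}; the paper's route is shorter because it offloads the entire computation to an existing general formula.

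One small correction in your ``alternative, fully self-contained route'': the symmetry of $\Upsilon$ is $\Upsilon_K(t)=\Upsilon_K(2-t)$, not $\Upsilon_K(2g(K)-t)$; the domain of $\Upsilon$ is always $[0,2]$ regardless of the genus. With that fixed, your alternative argument also goes through.
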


\begin{proof}
The result follows immediately from \cite[Theorem 1.15]{OSS}.
\end{proof}

With these lemmas in place, we are now ready to prove Theorem \ref{thm:main}.

\begin{proof}[Proof of Theorem \ref{thm:main}]
By \cite[Propositions 1.8 and 1.9]{OSS},
\[
	\Upsilon_{K_1 \# K_2} (t) = \Upsilon_{K_1} (t) + \Upsilon_{K_2} (t) \qquad \textup{ and }\qquad \Upsilon_{-K}(t) = -\Upsilon_K(t).
\]
Combined with Lemmas \ref{lem:UpsilonK} and \ref{lem:UpsilonT25}, it follows that
\[ \Upsilon_{T_{2, 5} \# -T_{4,5} \# T_{2,3; 2,5}}(t) \equiv 0. \]

We consider the invariant $a_1(K)$ defined in \cite[Section 6]{Homsmooth}. For complexes such as the ones in Figure \ref{fig:CFK}, the invariant $a_1(K)$ is equal to the length of the horizontal arrow coming in to the generator of vertical homology. From the partial description of $\CFKi(T_{4,5} \# -T_{2,3; 2,5})$ in Lemma \ref{lem:summandK}, it follows that
\[ a_1(T_{4,5} \# -T_{2,3; 2,5}) = 2. \]
By \cite[Lemma 6.5]{Homsmooth} we have that
\[ a_1(T_{2,5}) = 1. \]
Lastly, by \cite[Lemma 6.3]{Homsmooth} we have that if $a_1(J) > a_1(K)$, then $\varep(K \# -J) =1$. Thus
\[ \varep(T_{2, 5} \# -T_{4,5} \# T_{2,3; 2,5}) =1, \]
as desired. 

Recall from \cite[Proposition 3.6]{Homcables} that for $n>0$, we have
\[ \varep(nK) = \varep(K) \quad \textup{ and } \quad \varep(-K) = -\varep(K), \]
It follows that any non-zero multiple $nK$ of the knot $K=T_{2, 5} \# -T_{4,5} \# T_{2,3; 2,5}$ will also have the property that $\Upsilon_{nK}(t) \equiv 0$ and $\varep(nK) \neq 0$.
\end{proof}

\begin{proof}[Proof of Corollary \ref{cor:concordancegenus}]
The invariant $\gamma(K)$ vanishes if and only if $\varep(K)=0$. Hence $K=T_{2, 5} \# -T_{4,5} \# T_{2,3; 2,5}$ (or any non-zero multiple thereof) has the desired property.
\end{proof}

\begin{remark}
Let $K=T_{2, 5} \# -T_{4,5} \# T_{2,3; 2,5}$. By computing $\CFKi(K)$ using the K\"unneth formula \cite[Theorem 7.1]{OSknots}, one can determine that $\gamma(K)=4$. More generally, we expect that $\gamma(nK)=4n$, giving knots for which the concordance genus bound obtained from $\Upsilon_K(t)$ is zero, but the bound obtained from $\gamma$ is arbitrarily large.
\end{remark}

\bibliographystyle{amsalpha}

\bibliography{mybib}

\providecommand{\bysame}{\leavevmode\hbox to3em{\hrulefill}\thinspace}
\providecommand{\MR}{\relax\ifhmode\unskip\space\fi MR }
% \MRhref is called by the amsart/book/proc definition of \MR.
\providecommand{\MRhref}[2]{%
  \href{http://www.ams.org/mathscinet-getitem?mr=#1}{#2}
}
\providecommand{\href}[2]{#2}
\begin{thebibliography}{Hom11b}

\bibitem[Hed09]{HeddencablingII}
Matthew Hedden, \emph{On knot {F}loer homology and cabling {II}}, Int. Math.
  Res. Not. IMRN (2009), no.~12, 2248--2274.

\bibitem[HHN13]{HancockHomNewman}
Stephen Hancock, Jennifer Hom, and Michael Newman, \emph{On the knot {F}loer
  filtration of the concordance group}, J. Knot Theory Ramifications
  \textbf{22} (2013), no.~14.

\bibitem[Hom11a]{Homsmooth}
Jennifer Hom, \emph{The knot {F}loer complex and the smooth concordance group},
  preprint (2011), to appear in Comment. Math. Helv., available at
  arXiv:1111.6635v1.

\bibitem[Hom11b]{HomLspace}
\bysame, \emph{A note on cabling and {$L$}-space surgeries}, Algebr. Geom.
  Topol. \textbf{11} (2011), no.~1, 219--223.

\bibitem[Hom12]{Homconcordancegenus}
\bysame, \emph{On the concordance genus of topologically slice knots}, preprint
  (2012), to appear in Int. Math. Res. Not. IMRN, available at
  arXiv:1203.4594v1.

\bibitem[Hom13]{Homsummand}
\bysame, \emph{An infinite rank summand of topologically slice knots}, preprint
  (2013), to appear in Geom. Topol., available at arXiv:1310.4476v1.

\bibitem[Hom14]{Homcables}
\bysame, \emph{Bordered {H}eegaard {F}loer homology and the tau-invariant of
  cable knots}, J. Topol. \textbf{7} (2014), no.~2, 287--326.

\bibitem[HW14]{HeddenWatson}
Matthew Hedden and Liam Watson, \emph{On the geography and botany of knot
  {F}loer homology}, preprint (2014), \ arXiv:1404.6913v2.

\bibitem[OS03]{OS4ball}
Peter Ozsv{\'a}th and Zolt{\'a}n Szab{\'o}, \emph{Knot {F}loer homology and the
  four-ball genus}, Geom. Topol. \textbf{7} (2003), 615--639.

\bibitem[OS04]{OSknots}
\bysame, \emph{Holomorphic disks and knot invariants}, Adv. Math. \textbf{186}
  (2004), no.~1, 58--116.

\bibitem[OS05]{OSlens}
\bysame, \emph{On knot {F}loer homology and lens space surgeries}, Topology
  \textbf{44} (2005), no.~6, 1281--1300.

\bibitem[OSS14]{OSS}
Peter Ozsv\'ath, Andr\'as Stipsicz, and Zolt{\'a}n Szab{\'o}, \emph{Concordance
  homomorphisms from knot {F}loer homology}, preprint (2014), \
  arXiv:1407.1795.

\bibitem[Ras03]{R}
Jacob Rasmussen, \emph{Floer homology and knot complements}, Ph.D. thesis,
  Harvard University, 2003.

\end{thebibliography}

\end{document}